\newcommand{\C}{{\mathbb C}}
\newcommand{\ko}{{\mathcal O}}
\newtheorem{theorem}{Theorem}
\newtheorem{conjecture}[theorem]{Conjecture}
\newtheorem{proposition}[theorem]{Proposition}
\begin{document}

\begin {center}
{\bf Appendix: History of Singular\\
 and its relation to Zariski's multiplicty conjecture}\\
Gert-Martin Greuel and Gerhard Pfister
\end  {center}
\bigskip

When you call {\sc Singular}, local on your computer or online, the following heading appears:
\begin {center}
SINGULAR\\                                
 A Computer Algebra System for Polynomial Computations\\       
 by: W. Decker, G.-M. Greuel, G. Pfister, H. Schoenemann\\  
 FB Mathematik der Universitaet, D-67653 Kaiserslautern
\end{center}

In fact, {\sc Singular} is nowadays a widely used a computer algebra system for polynomial computations with special emphasis on the needs of commutative algebra, algebraic geometry, and singularity theory. However, at the beginning this was never planned, we just wanted to solve mathematical problems. Only later when we had been (partially) successful, we decided to create a system also to be used by others. The development of  {\sc Singular} has been strongly motivated and was for a long period mainly driven by mathematical problems in singularity theory. Even its appreciated computational speed is a consequence of singularity problems which are theoretically as well as computationally very hard. It is perhaps of interest to the singularities community to see how it all came about.

It started at a time, when symbolic computations was just beginning to emerge and algorithms, in particular for local computations, were practically not existent. Moreover,  our cooperation within two Germanies was anything but easy because a visit from East Germany to West Germany was not possible. Anyway, we could meet in East Berlin and we started a cooperation around 1984. 

\begin{center}\includegraphics[width=6cm]{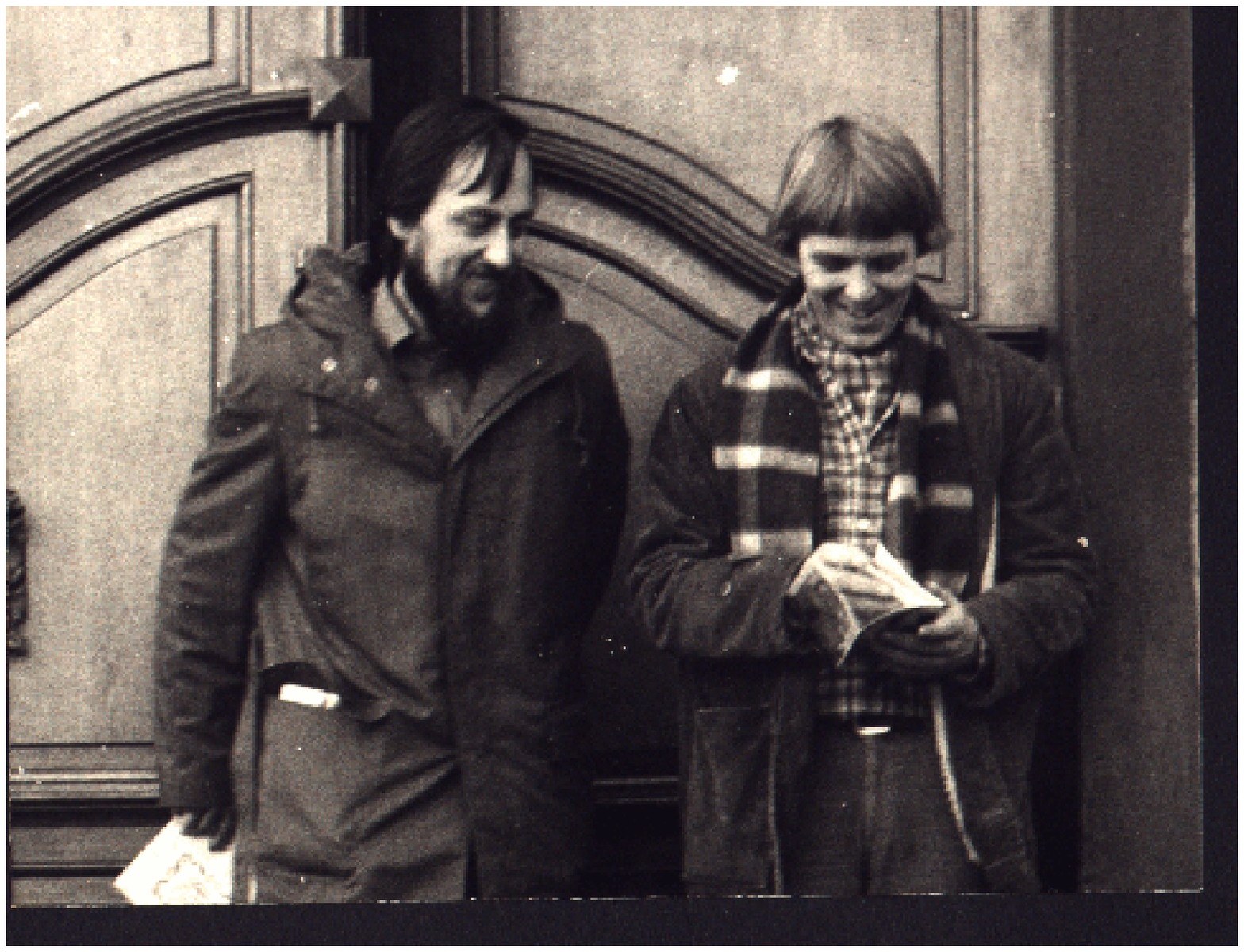}\\
Pfister (left) and Greuel at Humboldt University Berlin, 1984
\end{center}

%
The birth of {\sc Singular} goes back to our efforts to generalize Kyoji
Saito's well known result for hypersurface singularities (cf. \cite{S}):

\begin{theorem}[K. Saito, 1971]
Let $(X,0)$ be the germ of an
isolated complex hypersurface singularity. The following conditions are equivalent:
\begin{enumerate}
\item $(X,0)$ is quasi-homogeneous (that is, has a good
  $\mathbb C^\ast$-action). 
\item $\mu (X,0) = \tau (X,0) $.
\item The Poincar\'e complex of $(X,0)$ is exact.
\end{enumerate}
\end {theorem}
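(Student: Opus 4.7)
My plan is to prove the cycle (1)$\Rightarrow$(2)$\Rightarrow$(1) and then link (3) to the other two via a de Rham computation. Pick a defining equation $f \in \C\{x_1,\ldots,x_n\}$ for $(X,0)$ and set $J(f) = (\partial f/\partial x_1,\ldots,\partial f/\partial x_n)$, so $\mu = \dim_{\C}\C\{x\}/J(f)$ and $\tau = \dim_{\C}\C\{x\}/(J(f)+(f))$; note that $\mu - \tau$ measures the failure of $f$ to lie in $J(f)$. The direction (1)$\Rightarrow$(2) is immediate: if $f$ is quasi-homogeneous of type $(w_1,\ldots,w_n;d)$ in suitable coordinates, differentiating $f(\lambda^{w_1}x_1,\ldots,\lambda^{w_n}x_n) = \lambda^d f(x)$ at $\lambda=1$ yields Euler's identity $\sum_i w_i x_i\,\partial_i f = d\cdot f$, hence $f \in J(f)$.

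For (2)$\Rightarrow$(1), I would write $f = \sum_i a_i\,\partial_i f$ and form the holomorphic vector field $v = \sum_i a_i\,\partial_i$, which satisfies $v(f) = f$. The goal is to conjugate $v$ analytically to a linear diagonal vector field $E = \sum_i w_i x_i\,\partial_i$ with positive rational weights $w_i$; the flow of $E$ is then the desired good $\C^*$-action. My three-step strategy: (i) show that the linear part $A = Dv(0)$, constrained by $v(f)=f$ and the isolated-singularity hypothesis, has strictly positive rational eigenvalues and thus sits in the Poincar\'e domain; (ii) apply a Poincar\'e--Dulac normal form argument, using the fact that for a vector field with all eigenvalues on the same side of $0$ the nontrivial resonance conditions cannot be satisfied, so the formal normal form is already linear and semisimple; (iii) upgrade formal conjugation to analytic conjugation by convergence estimates in the Poincar\'e domain (or by Artin approximation). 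In the resulting coordinates, $v(f)=f$ becomes the Euler identity witnessing quasi-homogeneity.

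For the equivalence with (3), the Poincar\'e complex is the de Rham complex $\Omega^{\bullet}_{X,0}$ of the hypersurface, with $\Omega^p_{X,0} = \Omega^p_{\C^n,0}/(f\,\Omega^p + df\wedge\Omega^{p-1})$. Given (1), the Euler field $E$ of the $\C^*$-action restricts to $(X,0)$, and the Cartan formula $\mathcal{L}_E = d\,i_E + i_E\,d$ supplies a chain homotopy: since $\mathcal{L}_E$ acts with strictly positive weights on each graded piece it is invertible, so $i_E/\mathcal{L}_E$ is a contracting homotopy and the complex is exact apart from the expected $\C$ in degree $0$. Conversely, a direct Koszul-complex calculation with respect to $\partial_1 f,\ldots,\partial_n f$ identifies the first nontrivial cohomology of $\Omega^{\bullet}_{X,0}$ with a $\C\{x\}$-module whose length is exactly $\mu-\tau$, so exactness of the Poincar\'e complex forces $\mu=\tau$.

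The decisive obstacle is (2)$\Rightarrow$(1): passing from the purely algebraic relation $f \in J(f)$ to an honest analytic $\C^*$-action requires analytic linearization of the constructed vector field $v$, and the delicate point is to rule out nontrivial Jordan blocks and resonant terms before invoking convergence in the Poincar\'e domain. The remaining implications reduce to Euler's identity and a Koszul-type homological computation once the cohomological framework of the Poincar\'e complex is in place.
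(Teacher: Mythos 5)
The paper does not prove this theorem at all: it quotes Saito's result from \cite{S} as the starting point of the historical narrative, so there is no proof of record to compare with, and your sketch has to stand on its own. It does not, because the two steps you lean on in the direction $(2)\Rightarrow(1)$ are respectively false as stated and insufficient. Step (i): for an arbitrary choice of $a_i$ with $f=\sum a_i\partial_i f$, the linear part $A=Dv(0)$ need \emph{not} have positive rational eigenvalues. Already for $f=x^2+y^2$ the field $v=\tfrac12(x\partial_x+y\partial_y)+\lambda(y\partial_x-x\partial_y)$ satisfies $v(f)=f$ for every $\lambda$, and its eigenvalues are $\tfrac12\pm i\lambda$; so positivity is not a formal consequence of $v(f)=f$, and the isolatedness of the singularity enters in an essential, nontrivial way (compare $f=x^2y$, $v=y\partial_y$, where a weight $0$ appears). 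Establishing that one can replace $v$ (or its semisimple part in the sense of the Jordan--Chevalley decomposition acting on the complete local ring) by a field whose eigenvalues are strictly positive rationals is precisely the core of Saito's theorem, and your proposal asserts it without argument. Step (ii) is simply wrong: eigenvalues all on one side of $0$ do not exclude resonances (take $\lambda=(1,2)$, where $\lambda_2=2\lambda_1$), so the Poincar\'e--Dulac normal form is polynomial but in general not linear, and your claim that the formal normal form ``is already linear and semisimple'' fails. The standard repair is not to linearize $v$ at all, but to pass to its semisimple part $v_s$, observe that $v(f)=f$ forces $v_s f=f$, diagonalize $v_s$ formally, prove positivity and rationality of the weights, conclude that $f$ is formally equivalent to a quasi-homogeneous polynomial, and then descend from formal to analytic equivalence via finite determinacy or Artin approximation.

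The part concerning (3) is also thinner than it looks. The implication $(1)\Rightarrow(3)$ via the Cartan homotopy $\mathcal{L}_E=d\,i_E+i_E\,d$ and invertibility of $\mathcal{L}_E$ on positive-weight pieces is fine. But the converse rests on your assertion that ``a direct Koszul-complex calculation'' identifies the relevant cohomology of $\Omega^\bullet_{X,0}$ with a module of length exactly $\mu-\tau$; this is not a routine Koszul computation. Both the vanishing of the intermediate cohomology of the Poincar\'e complex for an isolated hypersurface singularity and the identification of the top obstruction with $\mu-\tau$ are substantive results (Saito's original argument, and the duality results of Greuel cited as \cite{G} in this very appendix, which is how the curve-case generalization in \cite{GMP} proceeds). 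As written, your proposal acknowledges the ``decisive obstacle'' at $(2)\Rightarrow(1)$ but does not overcome it, and it papers over the second genuinely hard input at $(3)\Rightarrow(2)$; the only complete pieces are $(1)\Rightarrow(2)$ (Euler's identity) and $(1)\Rightarrow(3)$.
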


If $(X,0)$ is given by $f\in \C\{x_1,...,x_n\}$ then 
$\mu (X,0) = \dim_\C \C\{x_1,...,x_n\}/ j(f)$
is the Milnor number and $\tau (X,0)=\dim_\C \C\{x_1,...,x_n\}/ \langle f, j(f)\rangle$ the Tjurina number, with 
$j(f)=\langle {\partial f}/{\partial x_1}, \ldots, {\partial f}/{\partial x_n}\rangle$. 

Based on results in  \cite{G1} and \cite{G} we proved in \cite{GMP} the following generalization of Saito's result to 
isolated complete intersection curve singularities (the result  in \cite{GMP}  was more general for reduced Gorenstein curve singularities, with the Tjurina number replaced by the Deligne number).

\begin{theorem}[G.-M. Greuel, B. Martin, G. Pfister, 1985] If $(X,0)$ is a reduced complete intersection curve
singularity, then 
$$ (X,0) \text{ quasi-homogeneous } \Longleftrightarrow \; \mu (X,0)
= \tau (X,0)\,. $$ 
\end{theorem}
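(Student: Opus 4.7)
The plan is to prove the equivalence via two directions, with the easy direction following the same Euler-identity pattern as Saito's original argument, and the hard direction requiring a more delicate module-theoretic analysis specific to complete intersection curves, building on the formulas developed in \cite{G1} and \cite{G}.

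For the easy implication ($\Rightarrow$), suppose $(X,0)$ is defined by $f_1,\dots,f_k\in\C\{x_1,\dots,x_n\}$ (with $n-k=1$) and is quasi-homogeneous with weights $(w_1,\dots,w_n)$ and $f_i$ of weighted degree $d_i$. Then Euler's relation $d_i f_i = \sum_j w_j x_j\,\partial f_i/\partial x_j$ shows that each $f_i$ lies in the ideal generated by the partial derivatives (up to multiplication by coordinates). In the complete-intersection setting this means each $f_i$ lies in the ideal $\langle f_1,\dots,f_k\rangle+\fm\cdot M$, where $M$ is the Jacobian/Fitting ideal relevant to the Tjurina number. This forces the ideals computing $\mu$ and $\tau$ to agree modulo the image, yielding $\mu(X,0)=\tau(X,0)$.

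For the hard implication ($\Leftarrow$), I would compute the difference $\mu-\tau$ as the $\C$-dimension of a concrete quotient module on $(X,0)$. More precisely, using the Poincar\'e-type complex on the ICIS together with the results of \cite{G1,G}, one obtains an exact sequence of finite-length modules that expresses $\mu-\tau$ as the dimension of a cokernel of a natural map between $\ko_{X,0}$, the module of K\"ahler differentials $\Omega^1_{X,0}$, its torsion submodule, and the dualizing module $\omega_{X,0}$. Dually, this cokernel measures the failure of the surjection $\mathrm{Der}_\C(\ko_{X,0})\otimes\fm\to\fm$ induced by evaluation. The hypothesis $\mu=\tau$ forces this cokernel to vanish, which yields the existence of a derivation $\delta\in\mathrm{Der}_\C(\ko_{X,0})$ such that $\delta(x_i)\equiv w_i x_i$ (modulo higher order) for some positive rationals $w_i$. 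A Scheja--Wiebe type linearization argument then upgrades $\delta$ to an honest Euler vector field, producing the desired $\C^\ast$-action.

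The main obstacle is the identification of $\mu-\tau$ with a module that controls the existence of the Euler derivation, in a form robust enough to handle all reduced complete intersection curves (rather than only hypersurfaces, where Saito's Koszul-theoretic formula is available). Establishing this formula requires careful interaction between the torsion of $\Omega^1_{X,0}$, the conductor ideal arising from the normalization, and the dualizing module, which is precisely what the preparatory results of \cite{G1,G} are tailored to provide; once the formula is in hand, extracting and integrating the Euler field is comparatively routine.
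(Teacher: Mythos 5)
There is a genuine gap: what you have written is a roadmap, not a proof, and the two places where you wave your hands are exactly where the mathematical content lies. (Note also that the paper itself gives no proof of this theorem --- it is quoted from \cite{GMP} and rests on the earlier results of \cite{G1} and \cite{G} --- so the only way to assess your text is on its own terms.) For the direction ``quasi-homogeneous $\Rightarrow \mu=\tau$'' your Euler-identity argument is a hypersurface argument: when the codimension is $1$, the relation $d\,f=\sum_j w_jx_j\,\partial f/\partial x_j$ gives $f\in j(f)$ and hence $\mu=\tau$ directly, because both numbers are dimensions of explicit cyclic quotients of $\C\{x_1,\dots,x_n\}$. For a complete intersection curve of codimension $\ge 2$ neither invariant has such a description: $\tau$ is the length of $T^1_{X,0}$, a cokernel of a map of modules, and $\mu$ is only accessible through $\dim_\C\Omega^1_{X,0}/d\ko_{X,0}$ or the L\^e--Greuel type formula (the space-curve case of which is displayed in the paper). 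The sentence ``this forces the ideals computing $\mu$ and $\tau$ to agree'' has no precise meaning in that setting; the actual implication is a nontrivial theorem of Greuel \cite{G}, proved via duality in local cohomology and the exactness properties of the Poincar\'e complex, not by an ideal membership coming from the Euler relation.

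For the converse, the two assertions you label as the plan --- that $\mu-\tau$ equals the length of a cokernel which measures the failure of an Euler-type derivation to exist, and that its vanishing produces a derivation $\delta$ with $\delta(x_i)\equiv w_ix_i$ with \emph{positive} weights --- are precisely the theorem to be proved, and you give no argument for either. The delicate points are (i) establishing the length formula, which requires the interplay of the torsion of $\Omega^1_{X,0}$, the dualizing module $\omega_{X,0}$ and the map $d\colon\ko_{X,0}\to\omega_{X,0}$ (this is the content of \cite{G1,G}, not a consequence one can simply invoke in a line), and (ii) the passage from ``some non-nilpotent derivation exists'' to ``a derivation with positive eigenvalues exists,'' which is where reducedness and the one-dimensionality of $(X,0)$ enter and which is the crux before one can apply the Saito/Scheja--Wiebe linearization theorem. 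Declaring this last step ``comparatively routine'' conceals the main difficulty; as it stands, your text would be accepted as a statement of strategy consistent with the published proof, but not as a proof.
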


So we asked ourselves in \cite[Problem 1]{GMP} whether $(X,0)$ is quasi-homogeneous if the Poincar\'e complex of $(X,0)$ is exact (the other direction is clear). At the beginning we actually conjectured that the answer should be positive.  However, we did not succeed in proving it and so we started to look for possible counter examples. But the computations by hand were very time consuming and with the small examples at hand we were unable to find any counter example. Nevertheless, we started not to believe in the conjecture.
 
To compute potential counter examples with a help of a computer, two main problems appeared: First, we needed Teo Mora's tangent cone algorithm (a variation of Buchberger's algorithm for local rings) to compute standard bases  for 
$\ko_{X,0}$-modules. However, no package for this existed at that time, not even for ideals.
The second problem was more of  a theoretical nature. We needed to compute the kernel of the exterior derivation in the Poincar\'e complex, which is only $\C$-linear but not $\ko_X$-linear and hence not directly  tractable by standard bases computations. Fortunately, using a result of Reiffen (see below)  we had been able 
 in  \cite{GMP} to reformulate the exactness of the 
Poincar\'e complex as a question of computing submodule membership and  dimensions of $\ko_{X,0}$-modules.

The first problem was more serious. There was no computer algebra system available which could compute this kind of examples. In 1984 Neuendorf and Pfister (during vacations at the baltic sea) started an implementation of Buchberger's Gr\"obner 
basis algorithm in Basic on a ZX-Spectrum (an 8 bit home PC from Sinclair UK, 1982).  It took Pfister and his student Hans Sch\"onemann two more years of
development to obtain a Modula-2 implementation of a package, called {\em Buchmora} at that time (Buchberger's and Mora's algorithm) for Atari computers.
Using this implementation the following counter examples were found (cf. \cite{PS}).

\begin{theorem}[G. Pfister, H. Sch\"onemann, 1989]
 Let $(X_{lk},0)$ be the germ of the unimodal space curve singularity 
$FT_{k,l}$ of the
classification of C.T.C. Wall (cf. \cite{W}) defined by the equations
$$
xy + z^{l-1} = xz + yz^2 + y^{k-1} = 0\,, \quad 
(4 \leq l \leq k ,\:  5 \leq k)\,.
$$
Then the Poincar\'e complex
$$0 \longrightarrow \mathbb C \longrightarrow \mathcal O_{X_{lk},0} 
\longrightarrow
\Omega^1_{X_{lk},0} \longrightarrow \Omega^2_{X_{lk},0} \longrightarrow
\Omega^3_{X_{lk},0} \longrightarrow 0$$
is exact,
but $(X_{lk},0)$ is not quasi-homogeneous.
\end{theorem}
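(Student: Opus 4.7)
The plan is to separate the statement into its two halves---failure of quasi-homogeneity and exactness of the Poincar\'e complex---and reduce each to a finite list of standard-basis computations in the local ring $\C\{x,y,z\}$ that can be handled by Mora's tangent cone algorithm. The two-parameter family $FT_{k,l}$ will be treated either with symbolic exponents or by a semicontinuity argument along the $\mu$-constant stratum of Wall's classification.

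For the negative claim I would invoke Theorem 2: since $(X_{lk},0)$ is a reduced complete intersection curve singularity, it suffices to exhibit $\tau(X_{lk},0) < \mu(X_{lk},0)$. Both numbers are colengths of explicit finite-codimensional modules in $\C\{x,y,z\}$ and hence directly computable once a standard basis is in hand. As a coordinate-level sanity check, requiring the equations $f_1=xy+z^{l-1}$ and $f_2=xz+yz^2+y^{k-1}$ to be weighted homogeneous forces the weight balance $(l-2)(k-2)=4$, which is incompatible with $l\geq 4$ and $k\geq 5$. This already rules out a good $\C^\ast$-action in the given coordinates, but Theorem 2 is essential in order to exclude such an action after an arbitrary analytic coordinate change.

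The delicate half is the exactness of the Poincar\'e complex, because the exterior derivative is only $\C$-linear and is therefore not directly accessible to standard-basis machinery. Following Reiffen's description of the universally finite K\"ahler differentials, together with the reformulation carried out in \cite{GMP}, I would translate the vanishing of the cohomology at each level $p$ into a finite list of conditions of two types: membership of an explicit form $\omega$ in a certain $\ko_{X_{lk},0}$-submodule $M_p\subset \Omega^p_{X_{lk},0}$, and a $\C$-dimension comparison between two finite-dimensional quotients. Both types of question are routine outputs of the Mora--Buchberger algorithm as implemented in \emph{Buchmora}, once the modules $M_p$ are written down explicitly from Reiffen's recipe.

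The principal obstacle is not running the algorithm on any single pair $(l,k)$---that is mechanical---but certifying the same conclusion uniformly for all admissible $(l,k)$. The plan is to execute the standard-basis computation with $l$ and $k$ carried as symbolic exponents, to verify that the leading-term patterns of the standard bases are stable throughout the region $4\leq l\leq k$, $5\leq k$, and thereby to read off both the strict inequality $\tau<\mu$ and the Reiffen membership conditions in closed form. The bookkeeping required to maintain this parametric uniformity, and to ensure that no new leading terms appear at sporadic values of $(l,k)$, is the main technical burden of the argument.
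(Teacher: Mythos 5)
Your plan coincides with the paper's own argument: non-quasi-homogeneity is reduced via Theorem 2 to showing $\mu\neq\tau$ (the paper pins this down as $\mu=\tau+1=k+l+2$ using the explicit colength formulas with the $2$-minors of the Jacobian matrix), and exactness of the Poincar\'e complex is reduced via Reiffen's criterion to a submodule membership plus a dimension comparison, all verified with the Buchmora standard-basis implementation. Your added remarks (the weight computation $(l-2)(k-2)=4$ in the given coordinates and the concern about uniformity in $(l,k)$) are sensible refinements but do not change the route; the proposal is correct and essentially the same proof.
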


\begin{proof} To show that $(X_{lk},0)$ is not quasi-homogeneous, it suffices to show
$$ \mu (X_{lk},0) = \tau (X_{lk},0) + 1 = k + l + 2$$
by the following formulas.
Let \mbox{$(X,0)\subset (\mathbb C^3\!,0)$ }be the space curve singularity defined by
\mbox{$f = g = 0$}, with \mbox{$f,g \in \mathbb C \{x,y,z\}$}. Then
\begin{itemize}
\item
$\mu (X,0) =
  \dim_{\mathbb C}(\Omega^1_{X,0}/d\mathcal{O}_{X,0})$
$ = \dim_{\mathbb C}\mathbb C \{x,y,z\}/\langle
f,M_1,M_2,M_3\rangle$\\
\hspace*{5.2cm} $-\dim_{\mathbb C} \mathbb C \{x,y,z\}/\langle
\frac{\partial f}{\partial x},\frac{\partial f}{\partial y},\frac{\partial
  f}{\partial z}\rangle$,
\item

$\tau (X,0) = \dim_{\mathbb C}\mathbb C \{x,y,z\}/\langle
f,g,M_1,M_2,M_3\rangle$,
\end{itemize}

\noindent
with $M_1,M_2,M_3$ the 2-minors of the Jacobian matrix 
$\left(
\begin{matrix}
\frac{\partial f}{\partial x}&\frac{\partial f}{\partial y} &
\frac{\partial f}{\partial z}\\
\frac{\partial g}{\partial x}&\frac{\partial g}{\partial y} &
\frac{\partial g}{\partial z}
\end{matrix}\right)
$.

\noindent
On the other hand, a result of Reiffen says:
\begin{itemize}
\item  The Poincar\'e complex is exact iff
\begin{quote}
\begin{enumerate}
\item 
$\langle f,g\rangle \cdot \Omega^3_{\mathbb C^3,0} \subset d(\langle
f,g\rangle\cdot \Omega^2_{\mathbb C^3\!,0})$, and
\item 
$\mu (X,0) = \dim_{\mathbb C}(\Omega^2_{X,0}) - \dim_{\mathbb C}(\Omega^3_{X,0})$.
\end{enumerate}
\end{quote}
\end{itemize}
All these statements could be checked with the Buchmora algorithm.
\end{proof}

Encouraged by this success and having a computer algebra system that was able to compute in local rings, we tried to find a counter example to  Zariski's multiplicity conjecture (Zariski had posed this as a question, which he supposed to have quick answer by topologists, cf. \cite{Z}).

\begin{conjecture}[O. Zariski, 1971]  
Two hypersurface singularities (given by convergent power series) with the same topological type have the same multiplicity.
\end{conjecture}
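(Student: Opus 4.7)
The plan is a two-pronged strategy: to attempt to construct a counter-example computationally in the spirit of the previous theorem, while in parallel looking for a topological invariant that recovers the multiplicity. Given the experience with Problem~1 from~\cite{GMP}, where what seemed a plausible conjecture turned out to be false, I would invest most of the effort into the search for counter-examples first.

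For the computational side, I would exploit the theorem of L\^e and Ramanujam: outside dimension~$n=2$, a $\mu$-constant family of isolated hypersurface singularities is topologically equi-singular. Consequently any $\mu$-constant family with jumping multiplicity at the special fibre produces a counter-example to Zariski's conjecture. Using the Mora standard basis implementation, I would systematically perturb the unimodal and bimodal germs from Arnold's classification---in particular members of the series $J_{k,i}$, $Z_{k,i}$, $Q_{k,i}$, $W_{k,i}$---searching for a direction in the semi-universal deformation along which $\mu$ is preserved but the multiplicity drops. Since multiplicity is upper-semicontinuous and $\mu$ lower-semicontinuous on the base of a deformation, any such jump is concentrated on a proper stratum and is in principle detectable by a finite number of standard basis computations.

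On the theoretical side, I would look for a formula expressing the multiplicity of $f$ purely in terms of invariants of the embedded Milnor fibration, the most natural candidate being the zeta function of the monodromy, which is computable on any embedded resolution by A'Campo's formula. If the multiplicity were a function of this zeta function, Zariski's conjecture would follow; otherwise, pairs of germs with equal zeta functions but different multiplicities form the most promising source of counter-examples, and the computational search above can be narrowed to such pairs.

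The main obstacle is dimension~$n=2$, where the L\^e--Ramanujam theorem fails and $\mu$-constant deformations need no longer be topologically constant, so the implication ``equal $\mu$, different multiplicity $\Longrightarrow$ counter-example'' breaks down. One is then forced to certify topological equivalence directly, either by realising the two germs as members of an actual equisingular family or by computing the embedded link in $S^{2n+1}$; both are very expensive and not yet routinely supported, even in {\sc Singular}. A secondary obstacle is that verifying $\mu$-constancy over a parametric deformation requires standard basis computations over a polynomial ring in the deformation parameters, where the generic standard basis can behave very differently from its specialisations.
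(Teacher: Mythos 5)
There is a fundamental mismatch here: the statement is a \emph{conjecture}, the paper contains no proof of it, and your proposal is not a proof either --- it is a research programme. Both of your prongs are search strategies whose successful outcome would either \emph{refute} the statement (finding a $\mu$-constant family with jumping multiplicity) or would reduce it to an unproven hypothesis (that the multiplicity is a function of the monodromy zeta function, which you neither prove nor make plausible; no such formula is known). So nothing in the proposal establishes the conjecture, and nothing could be expected to: the question of Zariski \cite{Z} is open, and the authors of this paper explicitly state that after many failed attempts they lean towards it being \emph{false}. What the paper actually does is exactly the counterexample hunt you describe --- deforming surface singularities in three variables with degenerating Newton diagrams, computing $\mu$ and the multiplicity with the Buchmora/{\sc Singular} standard basis code --- and it reports that no counterexample was found; the only proven statement is the partial positive result of \cite{GP} for families $F_t=G_t+x_n^2H_t$ under a multiplicity hypothesis, which is far weaker than the conjecture.

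A few technical points in your plan also need repair. The Milnor number is \emph{upper} semicontinuous in a family (it can only jump up at the special fibre), not lower semicontinuous as you state. The L\^e--Ramanujam theorem is not known to \emph{fail} for surface singularities in $\C^3$; it is simply unproven there, and in any case it is not needed for the weak form of the conjecture (constant multiplicity in $\mu$-constant families), which is stated directly for such families --- this is why the paper's own search could use three-variable examples like $F_t = x^a+y^b+z^{3c}+\cdots$ without invoking topological equisingularity. Finally, restricting the search to Arnold's unimodal and bimodal germs is too narrow: those are semiquasihomogeneous or Newton nondegenerate in the relevant ranges, where the (weak) conjecture is already known by \cite{G2}, so any counterexample must be sought among more degenerate equations, precisely as the paper's Newton-diagram construction attempts.
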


A weaker version of this conjecture says:\\
{\em In a $\mu$--constant deformation of an isolated 
hypersurface singularity the  multiplicity is constant.} 
\medskip

The conjecture was already known for reduced plane curve singularities and the weaker conjecture for isolated quasi-homogeneous hypersurface singularities (cf. \cite{G2}). The methods of  \cite{G2} are in principal applicable to any isolated hypersurface singularity, but we failed to prove the weak Zariski's conjecture in general. Due to the many unsuccessful efforts by us and others  we were (and are still) convinced that Zariski's conjecture might not be true. 
\medskip

Hence, we tried to find a counterexample. The main problem is the difficulty to construct examples of $\mu$-constant deformations. Since Zariski's conjecture is true in the semi quasi-homogeneous case and for plane curve singularities, the examples to test should be somewhat complicated.
We used the Newton diagram to construct families of surface singularities where the multiplicity drops and with Newton diagram becoming degenerate but with rather small degeneracy area, hoping that the Milnor number would stay constant. 
Among others we tried a series of examples of the following form:

\begin{center}\includegraphics[width=6cm]{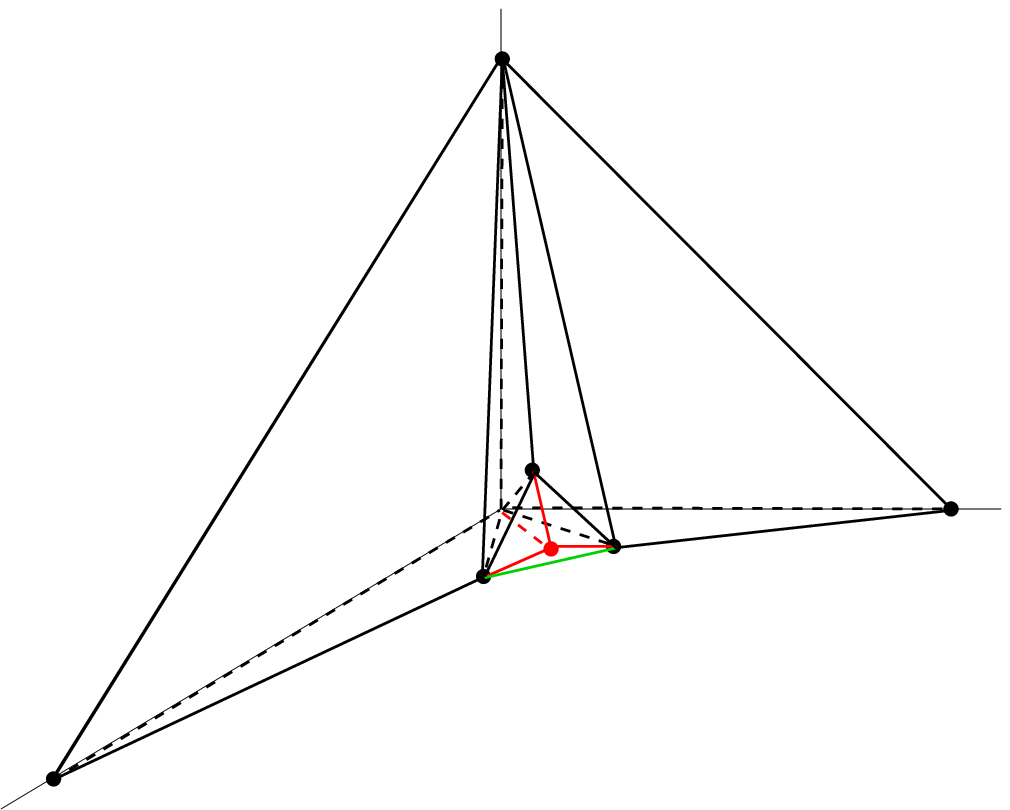}\\
$F_t = x^a + y^b + z^{3c} + x^{c+2}y^{c-1} + x^{c-1}y^{c-1}z^3 + x^{c-2}y^c(y^2 + tx)^2$
\end{center}

The multiplicity can be read of from the equation, but for the Milnor number we had to use a computer and the package Buchmora. However, this and other examples took hours to compute.  Whenever we met, in East Germany (often in Pfister's dacha close to Berlin) or at conferences outside West Germany, we tried to improve the algorithm by checking different local orderings and trying to
optimize the selection strategies during the standard basis computation (producing huge tables with timings). The selection strategies for different orderings, which we finally preferred, are still in use in the present version of  {\sc Singular}.

\begin{center} 
\includegraphics[width=6cm]{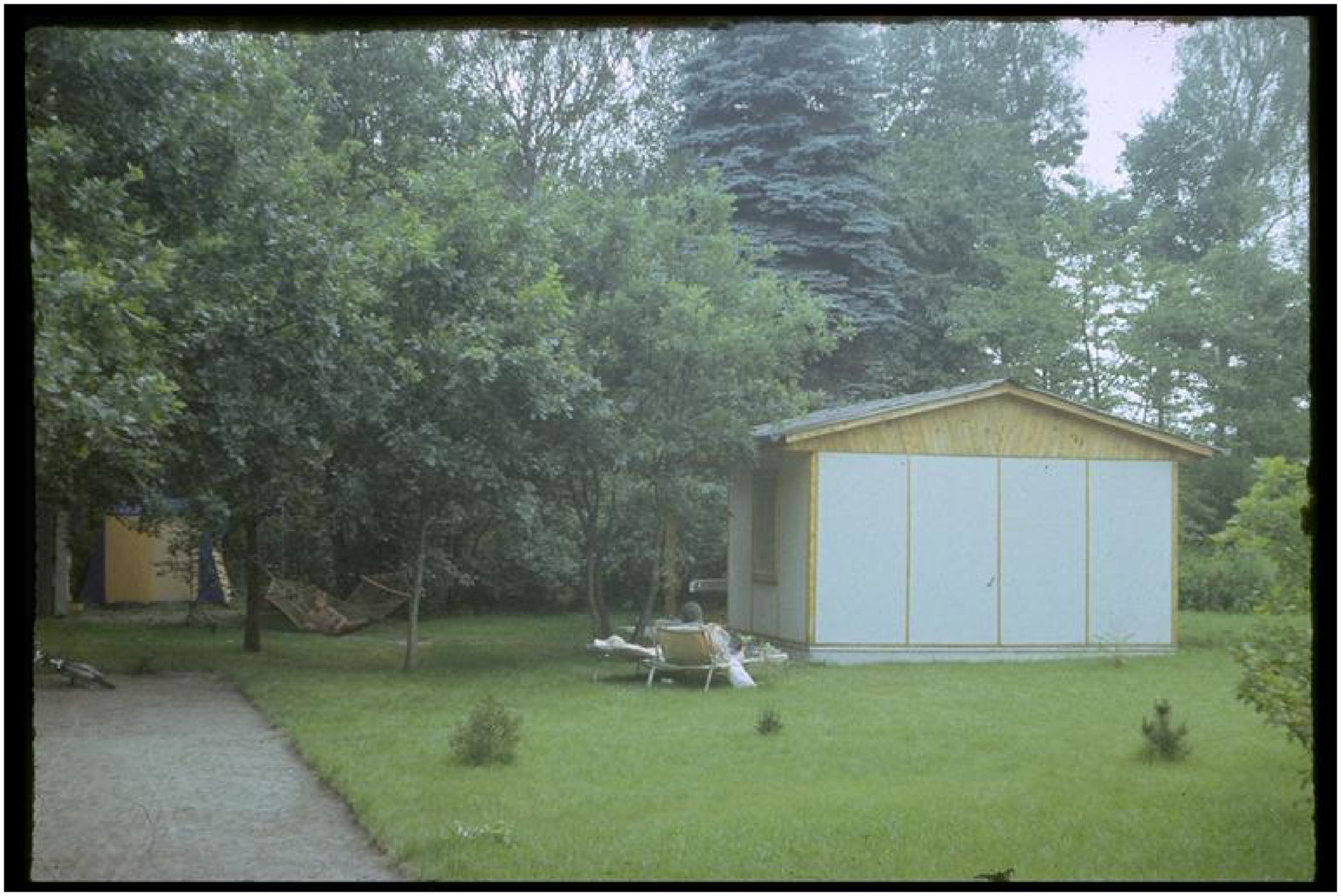}\\
The place where everything started: Pfister's dacha in the GDR
\end{center}

Among the above series of examples there was unfortunately no counter example. We found e.g. for $(a,b,c) = (40,30,8)$:
$$m(F_0) = 17, \ m(F_t) = 16, \ \mu(F_0) = 10661, \ \mu(F_t) = 10655.$$ 
The computations for $\mu$ took many hours (today within a few seconds), but smaller Milnor numbers could be excluded by heuristical arguments. A significant speed up of the computation of standard bases for local orderings was needed
and we decided to make a further step towards  a more professional development of a computer algebra package.\\

In 1989 Buchmora was renamed to {\sc Singular}. It was jointly developed by groups from Berlin (Pfister) and Kaiserslautern (Greuel) within a  DFG priority program 1990--1996.  Within this program we could hire Hans Sch\"onemann, who moved to Kaiserslautern in 1990, right after the unification of Germany.
 {\sc Singular} was ported to Unix (still in Modula-2) and a first user manual was released.  
In 1993 Pfister moved to Kaiserslautern and we decided to rewrite the code in C/C++, carried out mainly by Sch\"onemann. Within the  DFG priority program the {\sc Singular} programming language was developed and many  libraries had been established. Around 1996
Olaf Bachmann joined the team in Kaiserslautern and with his help it was possible to improve the code of {\sc Singular}  significantly, mainly by adapting the data structures and the memory management, which increased the speed drastically.

In spite of these improvements, no counter example was found! But by analyzing the above examples  a partial solution to Zariski's conjecture was published in \cite{GP}, including the
first publication of a standard basis algorithm for arbitrary mixed monomial orderings (implemented in Singular since 1993):

\begin{proposition}[G.-M. Greuel, G. Pfister, 1996]: 
Let 
$$F_t(x_1,\ldots,x_n)=G_t(x_1,\ldots,x_{n-1}) +x_n^2H_t(x_1,\ldots,x_n)$$ be a family of isolated hypersurface singularities.
Let $G_0$ be semiquasihomogeneous or let $n=3$.  If the family has constant Milnor number and the multiplicity of $G_t$ is smaller
or equal to the multiplicity of $H_t+2$ then the multiplicity of $F_t$ is constant.
\end{proposition}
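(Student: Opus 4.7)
The plan is to reduce the statement to the known instances of Zariski's conjecture — the semiquasihomogeneous case from \cite{G2} and the plane curve case, which applies automatically when $n=3$ since then $G_t$ lives in only two variables. The reduction proceeds in two layers: first pass from $F_t$ to $G_t$ at the level of multiplicity, then transfer $\mu$-constancy from $F_t$ to $G_t$.

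For the first layer, note that $G_t$ involves only $x_1,\ldots,x_{n-1}$, while $x_n^2 H_t$ has order at least $\text{mult}(H_t)+2$. Hence the lowest-degree part of $F_t$ coincides with that of $G_t$ whenever $\text{mult}(G_t)\le \text{mult}(H_t)+2$. Under the standing hypothesis this gives $\text{mult}(F_t)=\text{mult}(G_t)$ for every $t$, so it suffices to establish the constancy of $\text{mult}(G_t)$.

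The core of the proof is the second layer. Writing $\partial_{x_n}F_t = x_n\bigl(2H_t + x_n\partial_{x_n}H_t\bigr)$ and $\partial_{x_i}F_t = \partial_{x_i}G_t + x_n^2\partial_{x_i}H_t$ for $i<n$, one filters the Milnor algebra of $F_t$ by powers of $x_n$ and aims for an inequality of the form $\mu(F_t) \ge \mu(G_t) + c(H_t)$, where $c(H_t)$ is an auxiliary dimension controlled by $\text{mult}(H_t)$. The precise verification takes place in $\C\{x_1,\dots,x_{n-1}\}[x_n]$ with a mixed ordering (local in the first $n-1$ variables, polynomial in $x_n$) — exactly the standard basis situation whose algorithm is highlighted just above the proposition. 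Upper semicontinuity of $\mu(G_t)$ together with the corresponding semicontinuity of $c(H_t)$ — which is what the arithmetic condition $\text{mult}(G_t)\le\text{mult}(H_t)+2$ is there to secure — then forces $\mu(G_t)$ to be constant, because any drop would propagate to a drop in $\mu(F_t)$, contradicting the hypothesis.

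Once $G_t$ is known to be $\mu$-constant, the conclusion is immediate. If $G_0$ is semiquasihomogeneous, the weak Zariski conjecture for semiquasihomogeneous singularities of \cite{G2} gives constancy of $\text{mult}(G_t)$. If $n=3$, then each $G_t$ is a plane curve singularity and $\mu$-constancy is known to imply constancy of the multiplicity sequence, in particular of $\text{mult}(G_t)$. Combined with the first layer this yields the constancy of $\text{mult}(F_t)$. The main obstacle is the $\mu$-comparison in the middle step: one must produce an inequality tight enough that a drop of $\mu(G_t)$ cannot be absorbed by a rise of $c(H_t)$, and the hypothesis relating the multiplicities of $G_t$ and $H_t$ must be invoked at exactly this point. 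This is precisely the kind of estimate whose computational verification motivated the development of standard bases for mixed monomial orderings in \textsc{Singular}.
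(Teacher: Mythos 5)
The outer layers of your reduction are fine and are certainly part of any proof of this statement: since $G_t$ involves only $x_1,\dots,x_{n-1}$ while every monomial of $x_n^2H_t$ is divisible by $x_n^2$ and has order $\mathrm{mult}(H_t)+2\ge \mathrm{mult}(G_t)$, no cancellation of leading forms is possible and $\mathrm{mult}(F_t)=\mathrm{mult}(G_t)$; and once one knows that $t\mapsto\mu(G_t)$ is finite and constant, the conclusion follows from \cite{G2} when $G_0$ is semiquasihomogeneous and from the plane curve case of Zariski's conjecture when $n=3$. (Note that the appendix itself states the proposition without proof, citing \cite{GP}, so there is no in-text argument to compare yours with; it has to stand on its own.)

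The genuine gap is the middle step, which is the entire content of the proposition: you never prove that $\mu$-constancy of $F_t$ forces $\mu$-constancy of $G_t$, nor even that $G_t$ has an isolated singularity for $t\neq 0$, which is needed before \cite{G2} or the plane-curve theory can be invoked. You only announce an inequality $\mu(F_t)\ge\mu(G_t)+c(H_t)$ with $c(H_t)$ undefined, and the logic attached to it is backwards: a lower bound for $\mu(F_t)$ cannot show that a drop of $\mu(G_t)$ ``propagates to a drop in $\mu(F_t)$''. What would do the job is an identity with a controlled correction term: your filtration by powers of $x_n$ gives the exact sequence $0\to \C\{x\}/(j(F_t):x_n)\to \C\{x\}/j(F_t)\to \C\{x\}/(j(F_t)+\langle x_n\rangle)\to 0$, and since $j(F_t)+\langle x_n\rangle=\langle \partial G_t/\partial x_1,\dots,\partial G_t/\partial x_{n-1},x_n\rangle$ this yields $\mu(F_t)=\mu(G_t)+\dim_\C \C\{x\}/(j(F_t):x_n)$; if the colon term were upper semicontinuous in $t$, constancy of the sum together with semicontinuity of both summands would force $\mu(G_t)$ to be constant. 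But you establish neither this identity (you posit only a one-sided $\ge$) nor any semicontinuity of the correction term, and colon ideals do not specialize well in general --- this is exactly where the hypothesis and the standard-basis machinery of \cite{GP} have to enter. Your suggestion that the condition $\mathrm{mult}(G_t)\le\mathrm{mult}(H_t)+2$ is ``there to secure'' that semicontinuity is unsupported speculation: the only role you actually verify for it is the identification $\mathrm{mult}(F_t)=\mathrm{mult}(G_t)$. As it stands, your text supplies the easy bookkeeping around the key lemma but not the key lemma itself.
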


To conclude, let us remark, that the {\em failure} to find a counter example to Zariski's conjecture was the most important reason for the development of {\sc Singular} as it is now. First of all, for many years it was the main motivation to improve its speed, since the possible counter examples were complicated to compute.
Secondly, it was a very good theoretical problem that convinced the referees  to support the development of {\sc Singular} for many years.

\bigskip
{\sc Singular} -- Some History
\begin{itemize}
\item 1984  Neuendorf/Pfister: Implementation of the Gr\"obner 
basis algorithm in Basic on a ZX-Spectrum. 
\item 1990 Sch\"onemeann moved to KL, porting to Unix
\item 1993 Pfister moved to KL, C/C++ version. 
\item 1996--2000 Greuel/Pfister:  symbolic/numerical algorithms in {\sc Singular}, joint with electrical engineers and a Mathematic package ''Analog Insydes''.
\item 1997/1998 Singular release 1.0 -1.2, with multivariate polynomial factorization, gcd, syzygies, free resolutions, communication links, primary decomposition and normalization.
\item 2002
 Book:  A SINGULAR  Introduction to Commutative Algebra.\\ By G.-M. Greuel and G. Pfister, with contributions by O. Bachmann, C. Lossen and H. Sch\"onemann.
\item 2004 First
Richard D. Jenks Memorial Prize
 for Excellence in Software Engineering
awarded to {\sc Singular}  at ISSAC in Santander.
\item 2004 Greuel/Levandovsky: 
The subsystem PLURAL for non-commutative polynomial algebras is included in {\sc Singular}.
\item 2008 interface to the computer algebra system ''Sage''.
\item 2009 Decker moves to KL, with Greuel/Pfister/Sch\"onemann one of the leaders of the  {\sc Singular} developmnet.
\item 2016 The ''Oscar'' system includes a Julia package for the Singular library. 
\item  {\sc Singular}  has been supported by Deutsche Forschungsgemeinschaft (DFG), Stiftung Rheinland-Pfalz f\"ur Innovation, and Volkswagen Stiftung.
\item  {\sc Singular}    is free software, available at  https://www.singular.uni-kl.de/
\end{itemize}

\begin{center} 
\includegraphics[width=3in]{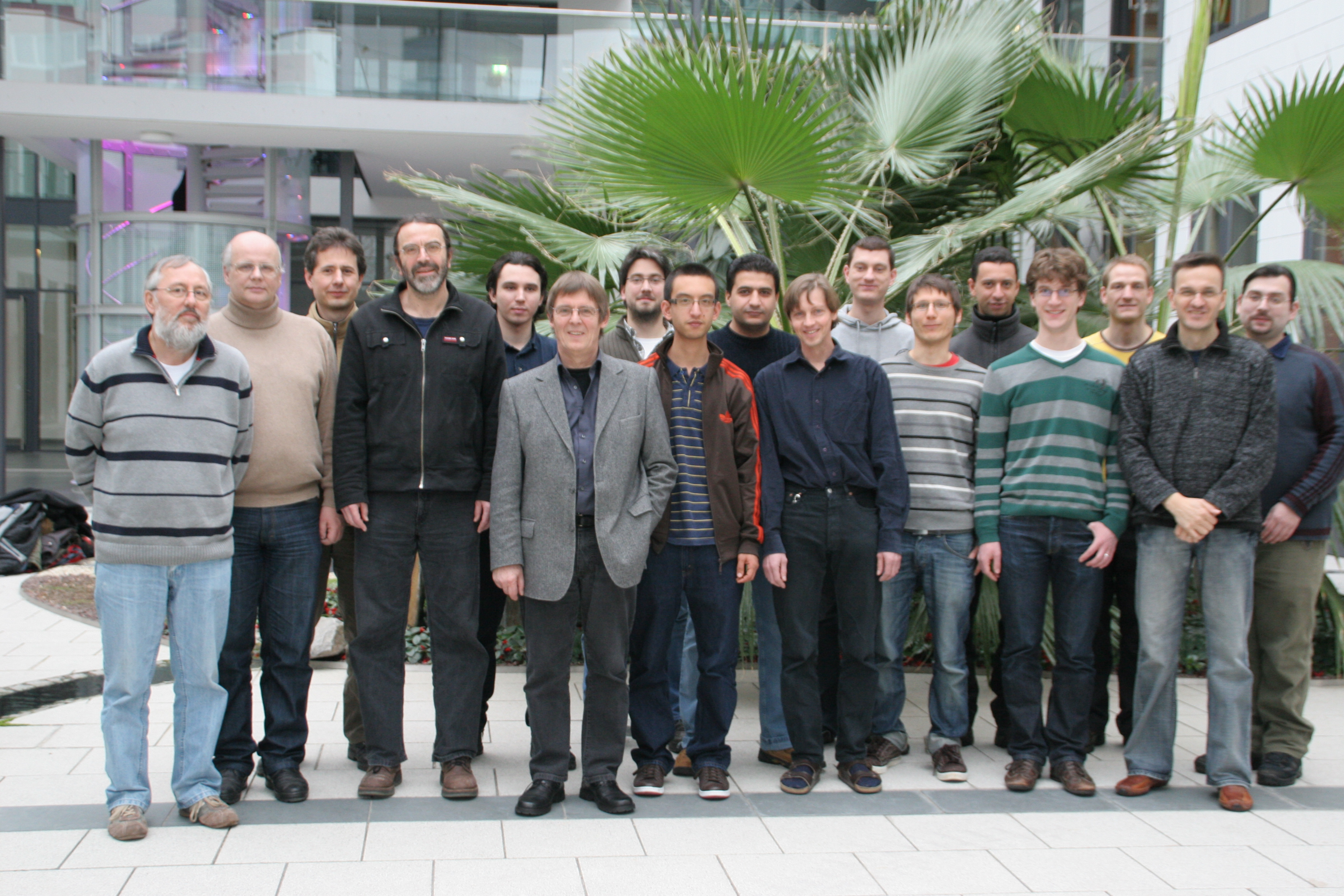}\\
{\sc Singular}-team with Pfister, Sch\"onemann, Lossen, Decker, Greuel, ...
\end{center}

\end{document}